\newcommand{\C}{\mathbb{C}}
\newcommand{\Z}{\mathbb{Z}}
\newcommand{\N}{\mathbb{N}}
\newcommand{\F}{\mathbb{F}}
\newcommand{\GL}{\mathbf{GL}}
\DeclareMathOperator{\Tr}{Tr}
\DeclareMathOperator{\supp}{supp}
\DeclareMathOperator{\ext}{Ext}
\newtheorem{theorem}{Theorem}[section]
\newtheorem{lemma}[theorem]{Lemma}
\newtheorem{corollary}[theorem]{Corollary}
\newtheorem{proposition}{Proposition}[section]
\theoremstyle{definition}
\newtheorem{definition}[theorem]{Definition}
\newtheorem{remark}[theorem]{Remark}
\numberwithin{equation}{section}
\begin{document}
	
	\title[Uncertainty principle for small index subgroups]{On an uncertainty principle for small index subgroups of finite fields}
	
	\author[Diego Fernando D\'iaz Padilla]{Diego Fernando Díaz Padilla$^\dagger$}
	\address{Pontificia Universidad Javeriana, Faculty of Science, Department of Mathematics \\ Bogot\'a, Colombia.}
	\curraddr{}
	\email{$^\dagger$ di-diego@javeriana.edu.co\\ $^\ddagger$jesus.ochoa@javeriana.edu.co}
	\thanks{}
	
	\author[Jes\'us Alonso Ochoa Arango]{Jes\'us Alonso Ochoa Arango$^\ddagger$}
	\address{}
	\curraddr{}
	\email{}
	\thanks{}
	
	\subjclass{43A32, 42A99, 43A25, 42A38, 15A15}
	\keywords{Fourier transform, DFT matrix, compressed Fourier transform, nonvanishing minors, uncertainty principle, Gauss sum, finite field, group character.}
	\date{}
	\dedicatory{}
	
	\begin{abstract}
		In this paper we continue the study of the \textit{nonvanishing minors property} (NVM) initiated by Garcia, Karaali and Katz, for the compressed Fourier matrix attached to a subgroup $H$ of the multiplicative group of a finite field $\mathbb{F}_q$ and a character $\chi$ defined over $H$.\@ Here we provide a characterization of this aforementioned property for \textit{symmetries} arising from an index-3 subgroup $H$ and a nontrivial character $\chi$.
	\end{abstract}
	
	\maketitle
	
	\section{Introduction}
	
	In discrete Fourier analysis, \textit{uncertainty principles} have played an essential role due to their profound implications in signal processing.\@ The study of these relations began in 1989 with the well-known theorem of D.\@ Donoho and P.\@ Stark in \cite{DonohoStark}.\@ Before stating this result, let us remind that if $G$ is a finite group, the group algebra of $G$ over $\C$, denoted by $\C[G]$, is the $\C$-vector space spanned by $G$,
	$$
	\C[G]= \biggl\{ \sum_{g\in G} f_{g}g\colon f_{g}\in \C\biggl\},
	$$
	endowed with the convolution product.\@ In what follows, $G$ will denote an arbitrary finite abelian group.\@ Recall that given $f\in \C[G]$, the \textit{Fourier transform} of $f$ is the function $\hat{f}\colon \hat{G}\to \C$ given by
	$$
	\hat{f}(\chi):= \sum_{g\in G}f_g\chi(g),
	$$
	where $\hat{G}$ denotes the group of characters of $G$.\@ The Donoho-Stark uncertainty principle for finite abelian groups states that if $f\in \C[G]$ is nonzero, then:
	$$
	|\supp(f)||\supp(\hat{f})|\geq |G|,
	$$
	where $\supp(f):=\{g\in G\colon f_g\neq 0\}$ and $|X|$ denotes the cardinality of a set $X$.\@ Various generalizations and results emerged from this principle, for instance \cite{MESHULAM1992401,KennanSmith90}, but perhaps the most important of all these is due to T.\@ Tao, who in \cite{Tao2003} proved that by considering $G$ to be the cyclic group $\Z/p\Z$ of prime order $p$, a substantial improvement can be obtained: if $f\in \C[G]$ is nonzero, then: 
	\begin{align}\label{eq:Tao}
		|\supp(f)|+|\supp(\hat{f})|\geq p+1.
	\end{align}
	
	This remarkable result, also discovered independently by A.\@ Biró \cite{biro_schweitzer_1998} and R.\@ Meshulam \cite{MESHULAM200663}, led to the developments in \cite{TaoCandesSignal} that gave rise to the field of \textit{compressed sensing} and several new uncertainty relations.\@ For more about this see \cite{MURTY2012214}, where the uncertainty result was generalized to arbitrary finite cyclic groups, or \cite{BORELLO2022112670,evra2017good,BorelloWillemsZini} for relations to the performance of cyclic codes and group codes; other studies can be found in \cite{Bonami2013,GHOBBER2011751,NICOLA2023109924}.
	
	At the core of these improvements is \textit{Chebotar\"ev's theorem} on roots of unity, originally proposed by A.\@ Ostrowski and proved by Chebotar\"ev in 1926 \cite{Stevenhagen1996ChebotarvAH}.\@ In Tao's paper \cite{Tao2003} it is proved that, indeed, Chebotar\"ev's theorem is equivalent to \eqref{eq:Tao}.\@ The theorem establishes that every minor of the \textit{discrete Fourier transform matrix} (DFT matrix) is nonzero if the matrix has prime order.\@ To be accurate, 
	\begin{theorem}[Chebotar\"ev]\label{thm:Chebotarev}
		Let $p$ be a prime and $\zeta$ a primitive $p$-th root of unity.\@ For every pair of subsets $I,J\subseteq \F_{p}$ with the same cardinality, the matrix $(\zeta^{ij})_{i\in I,j\in J}$ is nonsingular, that is, it has nonvanishing determinant.
	\end{theorem}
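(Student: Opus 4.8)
The plan is to follow the multiplicative/valuation argument of Frenkel, which I find the cleanest. Set $I = \{a_1, \dots, a_n\}$ and $J = \{b_1, \dots, b_n\}$, with the $a_i$ (resp.\ the $b_j$) taken as integer representatives in $\{0, 1, \dots, p-1\}$, and write $M = (\zeta^{a_i b_j})$. The whole computation takes place in the ring of integers $\Z[\zeta]$, where $t := \zeta - 1$ generates a prime ideal $(1-\zeta)$ with residue field $\Z[\zeta]/(1-\zeta) \cong \F_p$ and associated valuation $v$ normalized by $v(t) = 1$. The arithmetic fact I will use repeatedly is that any nonzero integer of absolute value strictly less than $p$ is a unit at this prime, i.e.\ has valuation $0$.

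First I would replace $\zeta$ by $1+t$ and study the integer-coefficient polynomial $D(t) := \det\bigl((1+t)^{a_i b_j}\bigr)$, whose specialization at $t = \zeta - 1$ is $\det M$. Writing $u_i := (1+t)^{a_i}$, the matrix becomes the generalized Vandermonde matrix $(u_i^{b_j})$, so the bialternant (Schur) factorization gives $D(t) = s_\lambda(u_1, \dots, u_n)\prod_{i<i'}(u_{i'} - u_i)$, where $\lambda$ is the partition determined by the exponents $b_1 < \dots < b_n$. This is the structural heart of the argument: it separates the determinant into a classical Vandermonde factor and a Schur polynomial with nonnegative integer coefficients.

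Next I would extract the lowest-order term in $t$. Since $u_{i'} - u_i = (a_{i'} - a_i)t + O(t^2)$, the Vandermonde factor contributes $\prod_{i<i'}(a_{i'} - a_i)\,t^{\binom n 2} + O(t^{\binom n 2 + 1})$, while $s_\lambda(u)\big|_{t=0} = s_\lambda(1, \dots, 1)$. Hence the coefficient of $t^{\binom n 2}$ in $D(t)$ equals $s_\lambda(1^n)\prod_{i<i'}(a_{i'}-a_i)$, and by the Weyl dimension (hook-content) formula $s_\lambda(1^n) = \prod_{k<k'}(b_{k'}-b_k)\big/\prod_{m=1}^{n-1}m!$. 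Every factor appearing here is a nonzero integer of absolute value $< p$ (the differences $a_{i'}-a_i$ and $b_{k'}-b_k$ because the residues are distinct, the factorials because $n \le p$), so the whole leading coefficient is a unit at $(1-\zeta)$.

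The conclusion is then immediate: every higher-order term $t^k$ with $k > \binom n 2$ has valuation $> \binom n 2$, so $v(\det M) = \binom n 2 < \infty$, forcing $\det M \ne 0$. The main obstacle, and the step deserving the most care, is this last one --- checking that the leading coefficient survives reduction modulo $(1-\zeta)$, equivalently that $s_\lambda(1^n) \not\equiv 0 \pmod p$. This is exactly where the hypotheses that $p$ is prime and that $I, J \subseteq \F_p$ (forcing $n \le p$ and all the relevant integers to be smaller than $p$ in modulus) are used; over a composite modulus a factorial or a difference could acquire a prime factor and the argument would break, consistent with the theorem failing for non-prime orders. An alternative route I would keep in reserve is Tao's original argument: a vanishing minor produces a nonzero $f$ supported on $J$ whose transform vanishes on all of $I$, that is, a polynomial $\sum_{j} f_j x^{b_j}$ with $n$ prescribed roots among the $\zeta^{a_i}$, and one derives a contradiction from a Galois-conjugation and degree count; but the Vandermonde--Schur computation above is more self-contained.
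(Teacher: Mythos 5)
The paper does not prove this theorem: it is stated as classical background for motivating the NVM property, with the proof deferred to the literature (the bibliography contains Stevenhagen--Lenstra, Newman, Evans--Isaacs, and Frenkel's note \cite{Frenkel2003SimplePO}, the last of which is exactly the argument you follow). So there is no in-paper proof to compare against; judged on its own, your proof is correct and essentially complete. The key points all check out: $t=\zeta-1$ generates the totally ramified prime above $p$ in $\Z[\zeta]$, so a nonzero rational integer of absolute value less than $p$ has valuation $0$; the bialternant factorization $\det\bigl(u_i^{b_j}\bigr)=s_\lambda(u)\prod_{i<i'}(u_{i'}-u_i)$ shows $t^{\binom{n}{2}}$ divides $D(t)$ exactly, with quotient at $t=0$ equal to $s_\lambda(1^n)\prod_{i<i'}(a_{i'}-a_i)$; and the Weyl dimension formula exhibits $s_\lambda(1^n)$ as $\prod_{k<k'}(b_{k'}-b_k)\big/\prod_{m=1}^{n-1}m!$, an integer prime to $p$ because all differences lie in $(-p,p)\setminus\{0\}$ and $n\le p$. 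The ultrametric inequality then pins $v(\det M)=\binom{n}{2}<\infty$. You correctly identify the one step that genuinely uses primality (the unit-ness of the leading coefficient), which is consistent with the known failure of the statement for composite moduli. The only cosmetic caveat is that the bialternant identity holds up to the sign/ordering convention for the $b_j$, which of course does not affect nonvanishing.
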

	
	The property that every minor of a given matrix is nonzero is of particular interest in this paper, so we introduce the following definition:
	\begin{definition}[Nonvanishing minors property]
	A matrix $A=(a_{i,j})_{1\leq i,j\leq n}$ with complex entries is said to have the \textit{nonvanishing minors property} (NVM) if for every $I,J\subseteq \{1,\dots,n\}$ with $|I|=|J|$, the determinant of $(a_{i,j})_{i\in I,j\in J}$ is nonzero.
	\end{definition}
	
	The equivalence of \eqref{eq:Tao} with Chebotar\"ev's theorem raises the question of whether other transformations related to the discrete Fourier transform exhibit the nonvanishing minors property in their matrix representations, and if this leads to improved uncertainty principles.\@ For example, if $n\geq 1$ is an odd integer then it can be proved that the $\frac{n+1}{2}\times \frac{n+1}{2}$ matrix attached to the \textit{discrete cosine transform} (DCT) satisfies the NVM property if and only if $n$ is prime or $n=1$; similarly, if we let $n\geq 3$ be an odd integer, in the case of the \textit{discrete sine transform} (DST) it can be proved that the $\frac{n-1}{2}\times \frac{n-1}{2}$ matrix attached to this transform satisfies the NVM property if and only if $n$ is a prime; see \cite{GARCIA2021899} for more details.
	
	Let $\F_{q}$ denote the finite field with $q$ elements. In \cite{GARCIA2021899} S.\@ R.\@ Garcia, G.\@ Karaali, and D.\@ Katz made significant improvements on \eqref{eq:Tao} by introducing a general notion of symmetry on elements of $\C[\F_{q}]$ that encompasses the aforementioned DCT and DST cases.\@ Given a subgroup $H \leq \mathbb{F}_q^{\times}$ and a complex character $\chi:H \to \mathbb{C}^{\times}$, an element $f=\sum f_{a}a\in \C[\F_{q}]$ is said to be \textit{$\chi$-symmetric} if $f_{ha} = \chi(h)f_{a}$ for all $h\in H$ and $a\in \F_{q}$.\@ When considering the Fourier transform on $\C[\F_{q}]$ restricted to the subspace of $\chi$-symmetric elements we arrive at the \textit{Compressed Fourier Transform} (CFT) attached to the pair $(H, \chi)$; see Definition\@ \ref{Def-CFT}.\@ For instance, if $p$ is an odd prime, $H=\{-1,1\}$ and the character $\chi$ is such that $\chi(-1)=-1$, then $\chi$-symmetric elements correspond precisely to elements $f$ such that $f_{-a}=-f_{a}$ and the CFT corresponds to the DST.\@ The introduction of the Compressed Fourier Transform led to the study of the NVM property for its associated matrix.\@ For non-prime finite fields, general conditions for the NVM property to be satisfied for the CFT matrix were not obtained, however for certain subgroups $H$ of a non-prime field $\F_{q}$ they arrive, for example, to the following results:
	\begin{itemize}
		\item If $H=\{1\}$ then the CFT matrix does not satisfy the nonvanishing minors property; refer to \cite[Corollary 6.2]{GARCIA2021899}.
		\item If $H=\F_{q}^{\times}$ or, in the case $q$ is odd, if $H$ an index-$2$ subgroup, and $\chi$ is the trivial character, then the CFT matrix exhibits the nonvanishing minors property; for more details see \cite[Proposition 6.5 and Theorem 6.6]{GARCIA2021899}.
		\item If $q$ is odd, $H$ an index-$2$ subgroup and $\chi$ nontrivial, a characterization was found in terms of Gaussian sums of character extensions \cite[Theorem 6.7]{GARCIA2021899}. 
		\item Again, if $q$ is odd, $3\mid (q-1)$, $H$ is an index-$3$ subgroup and $\chi$ is the trivial character, the NVM property holds if and only if $p\equiv 1$ (mod 3), where $p$ is the characteristic of $\F_{q}$; see \cite[Theorem 6.12]{GARCIA2021899}.
	\end{itemize}
	
	In this paper, we pursue this approach by providing concise necessary and sufficient conditions for the NVM property to hold in the case of index-$3$ subgroups $H$ and nontrivial characters $\chi$.
	\subsection{Structure of the paper}
	In Section \ref{sec:preliminaries} we will review some basic notions of character theory and discrete Fourier analysis, and then introduce the necessary ideas from \cite{GARCIA2021899}, such as $\chi$-symmetry and the compressed Fourier transform.\@ In Section \ref{sec:ourresult} we present our main result, Theorem \ref{thm:MainResult}, which characterizes the nonvanishing minors property of the CFT matrix for index-$3$ subgroups and nontrivial characters.
	
	\section{Preliminaries}\label{sec:preliminaries}
	
	\subsection{Characters and the Fourier transform}
	We begin by recalling the basic concepts of character theory on finite fields.\@ For a more detailed explanation we refer the reader to \cite{lidl1997finite}. 
	
	An \textit{additive character} of $\F_{q}$ is a group homomorphism from the additive group of $\F_{q}$ into the group $\C^{\times}$.\@ Similarly, a \textit{multiplicative character} of $\F_{q}$ is a group homomorphism now defined on the multiplicative group $\F_{q}^{\times} = \F_{q}-\{0\}$.\@ It is well known that one way to get a complete description of additive characters is by introducing the \textit{canonical additive character}:\@ let $p$ be the characteristic of $\F_{q}$, so that $q=p^{m}$ for some $m\in \N$, and consider the additive character $\varepsilon\colon \F_{q} \to \C^{\times}$ defined by $\varepsilon(x) := e^{2\pi i \Tr(x)/p}$ for all $x\in \F_{q}$, where 
	$$
	\Tr(x) := x + x^{p} + \cdots + x^{p^{m-1}},
	$$ 
	is the \textit{absolute trace} map from $\F_{q}$ to $\F_{p}$.\@ It can be shown that for every additive character $\psi$ there exists $a\in \F_{q}$ such that $\psi(x) = \varepsilon(ax)$ for all $x\in \F_{q}$, which allows us to define the character $\varepsilon_{a}\colon \F_{q} \to \C^{\times}$ given by $\varepsilon_{a}(x)=\varepsilon(ax)$ for all $x\in \F_{q}$.\@ Denote by $\hat{\F}_{q}$ the group of additive characters of $\F_{q}$, and if $S\subseteq \F_{q}$ define $\varepsilon_{S}:=\{\varepsilon_{s}\colon s\in S\} \subseteq\hat{\F}_{q}$, as in \cite{GARCIA2021899}, so that $\varepsilon_{\F_{q}}=\hat{\F}_{q}$.
	
	There is a relevant connection between multiplicative and additive characters in a finite field in terms of certain exponential sums called Gaussian sums.\@ Let $\chi$ be a multiplicative and $\psi$ an additive character of $\F_{q}$.\@ The \textit{Gaussian sum} $G(\chi,\psi)$ is defined as
	$$
	G(\chi,\psi):= \sum_{c\in \F_{q}^{\times}}\chi(c)\psi(c),
	$$
	and we will use the notation $G(\chi)$ when $\psi = \varepsilon$.\@ Perhaps one of the most important facts about Gaussian sums, and one that we will use later, is that if $\psi$ and $\chi$ are both nontrivial, then we have $|G(\chi,\psi)| = \sqrt{q}$; see \cite[Theorem 5.11]{lidl1997finite} for more details.\@ The sum $G(\chi,\psi)$ is closely related to the Fourier expansion of the multiplicative character $\chi$, as we now show.\@ Let $\C^{\hat{\F}_{q}}$ be the $\C$-vector space of functions from $\hat{\F}_{q}$ to $\C$ (the expression $X^{Y}$ is interpreted similarly) endowed with pointwise multiplication, and define the Fourier transform of $f\in \C[\F_{q}]$ as the map $\hat{f}\colon \hat{\F}_{q}\to \C$ given by:
	$$
	\hat{f}(\psi):=\sum_{a\in \F_{q}} f_{a}\psi(a). 
	$$
	The $\C$-algebra isomorphism $\mathcal{F}\colon \C[\F_{q}] \to \C^{\hat{\F}_{q}}$ given by $\mathcal{F}(f):= \hat{f}$ is called the \textit{Fourier transform on $\C[\F_{q}]$}, and its inverse $\mathcal{F}^{-1}$ is given by $\hat{f} \mapsto \mathcal{F}^{-1}(\hat{f})=\sum f_a a$, where
	$$
	f_{a} = \tfrac{1}{q}\sum_{\psi\in \hat{\F}_{q}} \overline{\psi(a)}\hat{f}(\psi).
	$$
	We can extend a multiplicative character $\chi\colon \F_{q}^{\times}\to \C^{\times}$ to a multiplicative map defined over the whole $\F_{q}$ by simply mapping $\chi(0)=0$.\@ If we use the standard identification $\C[\F_{q}]\cong \C^{\F_{q}}$ and the definition of the Fourier transform we can prove that $\hat{\chi}(\overline{\psi}) = G(\chi,\overline{\psi})$ for every $\psi\in \hat{\F}_{q}$.\@ Moreover, if we seek the value of $\chi$ at $c\in \F_{q}^{\times}$, we can use the Fourier inversion formula to get the remarkable expression:
	$$
	\chi(c) = \tfrac{1}{q}\sum_{\psi\in \hat{\F}_{q}}G(\chi,\overline{\psi})\psi(c),
	$$
	in which Gaussian sums are precisely the Fourier coefficients in this expansion.
	
	\subsection{Compressed Fourier transform}
	We now introduce, with some small modifications, the main definitions from \cite{GARCIA2021899}.\@ Let $H$ be a subgroup of the multiplicative group $\F_q^\times$ and $\chi\colon H\to \C^\times$ a character.\@ Let $\GL(V)$ denote the group of automorphisms on a $\C$-vector space $V$, and define the map $\mathcal{L}(\chi)\colon H\to \GL(\C[\F_q])$ as follows: 
	$$
	\mathcal{L}(\chi)_{h} \biggl( \sum_{a\in \F_{q}}f_{a} a \biggl)  := \sum_{a\in \F_{q}}\chi(h) f_{a} ha.
	$$
	The map $\mathcal{L}(\chi)$ is a group homomorphism and $\mathcal{L}(\chi)_{h}$ is a linear isomorphism of $\C$-vector spaces for each $h\in H$.\@ Intuitively, $\mathcal{L}(\chi)_{h}$ permutes each coefficient of $f\in \C[\F_{q}]$ and scales them by a root of unity.
	
	We are interested in the elements of $\C[\F_{q}]$ that are invariant under the action of $\mathcal{L}(\chi)$, that is, elements of the set:
	$$
	\C[\F_{q}]^{\chi}:= \biggl\{ f\in \C[\F_{q}]\colon \mathcal{L}(\chi)_{h}(f) = f, \text{ for all }h\in H \biggl\} .
	$$
	It can be easily shown that the set $\C[\F_{q}]^{\chi}$ is a $\C$-vector subspace of $\C[\F_{q}]$.\@ This subspace is actually $H$-invariant, that is to say, $\mathcal{L}(\chi)_{h}(f)\in \C[\F_{q}]^{\chi}$ for all $f\in \C[\F_{q}]^{\chi}$ and $h\in H$.\@ The dependency on both the subgroup $H$ and the character $\chi$ leads to the following definition:
	
	\begin{definition}[$\chi$-symmetry]
		Let $H$ be a subgroup of $\F_q^\times$ and $\chi\colon H \to \C^{\times}$ be a character.\@ Elements of $\C[\F_{q}]^{\chi}$ are called \textit{$\chi$-symmetric}, or equivalently, $f\in \C[\F_{q}]$ is said to be \textit{$\chi$-symmetric} provided that $f_{ha} = \chi(h)f_{a}$ for all $h\in H$ and $a\in \F_{q}$.
	\end{definition}
	
	Let us recall that, given $H$ a subgroup of $\F_{q}^{\times}$, the $H$-orbits of $\F_{q}$ are of the form $Ha=\{ha\colon h\in H\}$ for $a\in \F_{q}$, and when $a\neq 0$ they correspond precisely to the cosets of $H$ in the group $\F_{q}^{\times}$.\@ We say that $(\chi,S)$ is an \textit{orbit-representative pair of} $H$ if $S$ is a complete set of representatives of the $H$-orbits of $\F_{q}$ if $\chi$ is trivial, or of all of $\F_{q}^{\times}$ if $\chi$ is nontrivial.\@ If additionally, we have another set $R$ with the same property, then $(\chi,R,S)$ is called an \textit{orbit-representative 3-tuple of} $H$. 
	
	\begin{definition}[Compressed Fourier transform]\label{Def-CFT}
		Let $H$ be a subgroup of $\F_{q}^{\times}$ and $\chi\colon H\to \C^{\times}$ be a character.\@ Let $(\chi,S)$ be an orbit-representative pair of $H$.\@ Recall that $\varepsilon_{S}$ denotes the set of additive characters of the form $\varepsilon_{s}$ for $s\in S$.\@ The $\C$-vector space isomorphism
		\begin{align*}
			\mathcal{F}_{\chi}\colon \C[\F_{q}]^{\chi} &\to \C^{\varepsilon_{S}}\\
			f &\mapsto \hat{f}|_{\varepsilon_{S}}
		\end{align*}
		is referred to as the $(\chi,S)$-\textit{compressed Fourier transform}.
	\end{definition}
	
	\begin{remark}
		The fact that $\mathcal{F}_{\chi}$ is an isomorphism \cite[Proposition 3.10]{GARCIA2021899} shows that a $\chi$-symmetric element $f$ can be reconstructed with exactly $[\F_{q}^{\times}:H]$ measurements of its Fourier transform when $\chi$ is nontrivial, and with $[\F_{q}^{\times}:H] + 1$ measurements when $\chi$ is trivial, that is, one measurement on each orbit is sufficient to achieve this by the invertibility of $\mathcal{F}_{\chi}$.
	\end{remark}
	
	To obtain a matrix representation for the compressed Fourier transform, it is necessary to determine some basis for $\C[\F_q]^{\chi}$.\@ To this end, in \cite[Lemma 3.9]{GARCIA2021899} the authors attach a suitable basis $\{u_{\chi, r}\}_{r \in R}$ to each orbit-representative pair $(\chi,R)$ of $H$.\@ Thus, if we fix an orbit-representative 3-tuple $(\chi,R,S)$ of $H$, where $R$ and $S$ are endowed with some orderings, the representation matrix in this basis of the Compressed Fourier Transform (CFT) is referred to as the \textit{$(\chi,R,S)$-compressed Fourier matrix}.\@ For our purpose, it will not be necessary to introduce this basis since, as will be seen in the next section, an explicit expression for the entries of the $(\chi,R,S)$-CFT matrix is already known \cite{GARCIA2021899}.\@ Note also that the order of this matrix is $[\F_{q}^{\times}:H]$ when $\chi$ is nontrivial, and $[\F_{q}^{\times}:H] + 1$ if $\chi$ is trivial .\@ If $S=R$ and we impose the same ordering, then the $(\chi,R,S)$-compressed Fourier matrix is symmetric.
	
	Since the NVM property for CFT matrices is independent of the choice of sets of representatives and orderings then, for simplicity, it is said that the pair $(\F_{q},\chi)$ has or does not have the \textit{nonvanishing minors property}.\@ The next proposition, proved in \cite{GARCIA2021899}, provides a criteria, in terms of $\chi$-symmetric functions, for a pair $(\F_{q},\chi)$ to have the NVM property:
	\begin{proposition}[{\cite[Proposition 4.8]{GARCIA2021899}}]\label{thm:SUP}
		Let $H\leq \F_{q}^{\times}$ and $\chi\colon H \to \C^{\times}$ be a character.\@ Then $(\F_{q},\chi)$ has the nonvanishing minors property if and only if for every nonzero $\chi$-symmetric element $f\in \C[\F_{q}]^{\chi}$ we have,
		\begin{enumerate}
			\item if $\chi$ is nontrivial,
			$$
			|\supp(f)|+ |\supp(\hat{f})|\geq q+|H|-1,
			$$
			\item if $\chi$ is trivial,
			$$
			|\supp(f)|+ |\supp(\hat{f})|\geq\begin{cases}
				q+2|H|-1	& \text{if $f_{0} = 0$ and $\hat{f}(\epsilon_{0}) = 0$},  \\
				q+|H|	& \text{if $f_{0}=0$ or $\hat{f}(\epsilon_{0}) = 0$}, \\
				q+1	& \text{otherwise}. 
			\end{cases}
			$$
		\end{enumerate}
	\end{proposition}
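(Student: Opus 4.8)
The plan is to deduce the statement from a purely linear-algebraic duality between vanishing minors and supports, and then to transport that duality through the orbit structure of $\chi$-symmetric functions.

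First I would isolate the following elementary fact, valid for an arbitrary $N\times N$ complex matrix $M$: $M$ has the NVM property if and only if every nonzero vector $v$ satisfies $|\supp(v)|+|\supp(Mv)|\geq N+1$. For one direction, if $|\supp(v)|+|\supp(Mv)|\leq N$ for some nonzero $v$, set $J=\supp(v)$ and $k=|J|$; since $Mv$ then vanishes on at least $N-|\supp(Mv)|\geq k$ coordinates, one may choose a row set $I$ with $|I|=k$ on which $Mv$ vanishes, so that the square submatrix $M_{I,J}$ annihilates the nonzero vector $v|_{J}$ and is therefore singular. For the converse, a singular $k\times k$ submatrix $M_{I,J}$ has a nonzero kernel vector which, extended by zeros to a vector $v$ supported on $J$, satisfies $\supp(Mv)\subseteq I^{c}$, whence $|\supp(v)|+|\supp(Mv)|\leq k+(N-k)=N$. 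This mirrors the classical equivalence between Chebotar\"ev's theorem and \eqref{eq:Tao}, but exploits no special structure of $M$, so it applies verbatim to the CFT matrix.

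The next step is to apply this fact to $M$ the $(\chi,R,S)$-CFT matrix and to convert the two coordinate supports into the supports of $f$ and $\hat f$. Since $\chi$ takes values in the unit circle, $f_{ha}=\chi(h)f_{a}$ gives $f_{ha}\neq 0\iff f_{a}\neq 0$, so $\supp(f)$ is a union of $H$-orbits; each orbit $Ha$ with $a\neq 0$ has size $|H|$, while $f_{0}=0$ is forced when $\chi$ is nontrivial. A direct computation shows $\hat f(\varepsilon_{hb})=\overline{\chi(h)}\,\hat f(\varepsilon_{b})$, so $\hat f$ is $\overline{\chi}$-symmetric and $\supp(\hat f)$, viewed in $\hat{\F}_{q}\cong\F_{q}$, is likewise a union of $H$-orbits, with $\hat f(\varepsilon_{0})=0$ forced when $\chi$ is nontrivial. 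Using the basis $\{u_{\chi,r}\}_{r\in R}$ of \cite[Lemma 3.9]{GARCIA2021899}, for which the coordinates of $f$ are its orbit values and those of $\mathcal{F}_{\chi}f$ are the numbers $\hat f(\varepsilon_{s})$, the two coordinate supports count exactly the active orbits. Writing $a',b'$ for the numbers of active nontrivial orbits and setting $\delta_{f}=[f_{0}\neq 0]$, $\delta_{\hat f}=[\hat f(\varepsilon_{0})\neq 0]$, one obtains $|\supp(f)|+|\supp(\hat f)|=|H|(a'+b')+\delta_{f}+\delta_{\hat f}$, whereas the coordinate-support sum equals $(a'+\delta_{f})+(b'+\delta_{\hat f})$.

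Finally I would feed these identities into the lemma. When $\chi$ is nontrivial there are no boundary terms, the matrix has order $N=[\F_{q}^{\times}:H]$, and the inequality $|\supp(v)|+|\supp(Mv)|\geq N+1$ multiplies by $|H|$ to the stated bound $q+|H|-1$. When $\chi$ is trivial the order is $N=[\F_{q}^{\times}:H]+1$, and substituting the three possible values of $\delta_{f}+\delta_{\hat f}\in\{0,1,2\}$ into $|H|(N+1-\delta_{f}-\delta_{\hat f})+\delta_{f}+\delta_{\hat f}$ produces exactly $q+2|H|-1$, $q+|H|$, and $q+1$. The point I expect to require the most care is this last bookkeeping: one must check that the translation is an equivalence in both directions, that is, that because the bulk contribution $|H|(a'+b')$ is a multiple of $|H|$, the coordinate inequality $a'+b'\geq N+1-\delta_{f}-\delta_{\hat f}$ is equivalent to, and not merely implied by, the corresponding bound on $|\supp(f)|+|\supp(\hat f)|$ in each of the three cases. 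Confirming the normalization of the basis $\{u_{\chi,r}\}$ so that coordinate supports really do match active orbits, and handling the degenerate orbit $\{0\}$ and the trivial additive character $\varepsilon_{0}$ consistently, are the remaining details.
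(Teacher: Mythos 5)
The paper does not actually prove this proposition---it is imported verbatim from \cite[Proposition 4.8]{GARCIA2021899}---so the only meaningful comparison is with the cited source, and your argument is the same one used there: the elementary singular-submatrix/support duality for an arbitrary $N\times N$ matrix, combined with the observation that $\supp(f)$ and $\supp(\hat f)$ are unions of $H$-orbits of size $|H|$ (with $f_0=0$ and $\hat f(\varepsilon_0)=0$ forced when $\chi$ is nontrivial), followed by the case bookkeeping for $\delta_f,\delta_{\hat f}$. Your reduction is correct in both directions (the divisibility of the bulk term by $|H|$ is exactly what makes the coordinate inequality equivalent to the stated bounds) and all three numerical bounds check out, so the proposal is sound.
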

	The above proposition gives us an alternative version of the NVM property directly related to the uncertainty principle of Biró-Meshulam-Tao. 
	
	\section{Index-3 subgroups and nontrivial characters}\label{sec:ourresult}
	The nonvanishing minors property of $(\F_{q},\chi)$ when $H$ is an index-3 subgroup and $\chi$ is the trivial character is satisfied if and only if $p\equiv 1$ (mod $3$), where $p$ is the characteristic of $\F_{q}$ \cite[Theorem 6.12]{GARCIA2021899}.\@ Our result completes the characterization for nontrivial characters by giving concise necessary and sufficient conditions for the NVM property to hold.
	
	We shall comment on character extensions.\@ Suppose $H$ is a subgroup of a finite abelian group $G$, and let $\chi\colon H\to \C^{\times}$ be a character.\@ If we denote the set of extensions of $\chi$ to $G$ by $\ext(\chi)$, it can be proved that its cardinality is the index $[G:H]$.\@ To describe this set of extensions, consider first the \textit{annihilator} of $H$ in $\hat{G}$:
	$$
	\text{Ann}(H):=\left\lbrace \chi\in \hat{G}\colon \chi(h)=1, \; \text{for all }h\in H  \right\rbrace.
	$$
	It can be shown that the annihilator of $H$ in $\hat{G}$ is a subgroup of $\hat{G}$ of order $[G:H]$.\@ If we write $s = [G:H]$ and $\text{Ann}(H) = \{\vartheta_{0},\vartheta_{1},\dots,\vartheta_{s-1}\}$ then, given an extension $\varphi_{0}$ of $\chi$, it is clear that for every $\vartheta_{i}\in \text{Ann}(H)$ the product $\varphi_{0}\vartheta_{i}$ is an extension of $\chi$, and there are precisely $s$ extensions, hence:
	$$
	\ext(\chi) = \varphi_{0}\text{Ann}(H) = \{\varphi_{0}\vartheta_{i}\colon i= 0,\dots,s-1\}.
	$$
	
	Given $H\leq \F_{q}^{\times}$ and a character $\chi\colon H\to \C^{\times}$, the Gaussian sums (with $\psi = \varepsilon$) of the $s=[\F_{q}^{\times}:H]$ character extensions $\varphi_{0},\varphi_{1},\dots,\varphi_{s-1}$ are denoted as $G_{i}$ for all $i\in\{0,1,\dots,s-1\}$.\@ Now we are ready to state the following technical lemma that provides the entries of the compressed Fourier matrices:
	
	\begin{lemma}[{\cite[Lemma 6.4]{GARCIA2021899}}]\label{thm:GaussMatrix}
		Let $\F_{q}$ be a finite field, let $m$ be a positive integer such that $m\mid (q-1)$, and $H$ be the unique index-$m$ subgroup of $\F_{q}^{\times}$.\@ Let $\chi\colon H\to \C^{\times}$ be a character and $(\chi,R,S)$ an orbit-representative 3-tuple of $H$.\@ Then for any $r\in R$ and $s\in S$, the $(r,s)$-entry of a $(\chi,R,S)$-compressed Fourier matrix is
		$$
		[\mathcal{F}_{\chi}]_{r,s} = \begin{cases}
			|H|, & \text{if $rs=0$},\\
			\displaystyle\frac{1}{m}\sum_{i=0}^{m-1}\overline{\varphi_{i}}(rs) G_{i}, & \text{if $rs\neq 0$}.
		\end{cases}
		$$
	\end{lemma}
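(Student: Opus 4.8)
The plan is to work directly with the basis $\{u_{\chi,r}\}_{r\in R}$ of $\C[\F_q]^{\chi}$ furnished by \cite[Lemma 3.9]{GARCIA2021899}, whose element attached to $r$ is the $\chi$-symmetric orbit sum $u_{\chi,r}=\sum_{h\in H}\chi(h)\,(hr)$, supported on $Hr$ with coefficient $\chi(h)$ at $hr$. Since the $(\chi,R,S)$-compressed Fourier matrix represents $\mathcal{F}_{\chi}$ in this basis on the domain and in the evaluation basis $\{g\mapsto g(\varepsilon_{s})\}_{s\in S}$ on the codomain $\C^{\varepsilon_{S}}$, its $(r,s)$-entry is exactly the Fourier coefficient $\widehat{u_{\chi,r}}(\varepsilon_{s})$. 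First I would unfold this using the definitions of the Fourier transform and of $\varepsilon_{s}$, obtaining the ``twisted orbit sum''
$$
\widehat{u_{\chi,r}}(\varepsilon_{s})=\sum_{h\in H}\chi(h)\,\varepsilon(rsh),
$$
which is the common starting point for both branches of the claimed formula.

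For the case $rs=0$, I would first note that a zero factor forces $\chi$ to be trivial: when $\chi$ is nontrivial both $R$ and $S$ are contained in $\F_{q}^{\times}$, so $rs\neq 0$ automatically. With $\chi$ trivial, either $r=0$, in which case $u_{\chi,0}=\bigl(\sum_{h\in H}\chi(h)\bigr)\cdot 0$ is $|H|$ times the indicator of $0$ and $\varepsilon_{s}(0)=1$, or $s=0$, in which case $\varepsilon_{0}\equiv 1$; in either situation the twisted orbit sum collapses to $\sum_{h\in H}\chi(h)=|H|$, yielding the first branch.

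The substantive case is $rs\neq 0$. Here I would expand the claimed right-hand side by inserting $G_{i}=\sum_{c\in\F_{q}^{\times}}\varphi_{i}(c)\varepsilon(c)$ and interchanging the order of summation, reducing matters to evaluating $\sum_{i=0}^{m-1}\varphi_{i}(c)\overline{\varphi_{i}(rs)}$ for each $c\in\F_{q}^{\times}$. Writing each extension as $\varphi_{i}=\varphi_{0}\vartheta_{i}$ with $\vartheta_{i}$ ranging over $\text{Ann}(H)$, this factors as $\varphi_{0}(c)\overline{\varphi_{0}(rs)}\sum_{i}\vartheta_{i}(c)\overline{\vartheta_{i}(rs)}$. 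The orthogonality relations for the characters of the quotient $\F_{q}^{\times}/H$ carried by $\text{Ann}(H)$ then give that the last sum equals $m$ when $c(rs)^{-1}\in H$ and $0$ otherwise, so only $c\in(rs)H$ survive. Finally, writing $c=(rs)h$ with $h\in H$ and using $\varphi_{0}|_{H}=\chi$ together with $|\varphi_{0}(rs)|=1$ collapses the whole expression back to $\sum_{h\in H}\chi(h)\,\varepsilon(rsh)$, which matches the twisted orbit sum computed above.

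The main obstacle is precisely this last reduction: recognizing that the combination $\tfrac{1}{m}\sum_{i}\overline{\varphi_{i}}(rs)G_{i}$ of full Gaussian sums over $\F_{q}^{\times}$ is an orthogonality-projected form of the restricted character sum over $H$, and carrying out the interchange of summation cleanly so that $\text{Ann}(H)$-orthogonality isolates the single coset $(rs)H$. The remaining manipulations are routine, and once this identity is in place the lemma follows at once.
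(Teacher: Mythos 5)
The paper does not actually prove this lemma---it is quoted verbatim from \cite[Lemma 6.4]{GARCIA2021899}---so there is no internal proof to compare against; your proposal supplies the missing argument, and it is correct and complete. The identification of the $(r,s)$-entry with $\widehat{u_{\chi,r}}(\varepsilon_{s})=\sum_{h\in H}\chi(h)\varepsilon(rsh)$, the observation that $rs=0$ forces $\chi$ trivial so that the sum collapses to $|H|$, and the interchange of summation followed by $\mathrm{Ann}(H)$-orthogonality to cut the Gaussian sums down to the single coset $(rs)H$ are all sound, and this is essentially the derivation in the cited source (which likewise rests on expanding multiplicative characters in Gaussian sums and exploiting orthogonality over $\F_{q}^{\times}/H$). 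The one convention you fix silently is the normalization of the basis element as the full orbit sum $u_{\chi,r}=\sum_{h\in H}\chi(h)\,(hr)$, so that $u_{\chi,0}=|H|\cdot 0$ in the trivial-character case; since the $|H|$ appearing in the first branch of the stated formula forces exactly this normalization of \cite[Lemma 3.9]{GARCIA2021899}, no harm is done.
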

	
	When fixing a character $\chi\colon H\to \C^{\times}$ of an index-3 subgroup $H\leq \F_{q}^{\times}$, given the notation $G_{i}$ for the Gaussian sums of its character extensions, we introduce for simplicity the notation $T_{j}:= \sum_{i=0}^{2}\zeta_{3}^{ji}G_{i}$ for $j\in \Z$, where $\zeta_{3}=e^{2\pi i /3}$.
	
	\begin{theorem}\label{thm:MainResult}
		Let $\F_{q}$ be a finite field such that $3\mid(q-1)$, let $H$ be the unique index-3 subgroup in $\F_{q}^{\times}$, and let $\chi\colon H\to \C^{\times}$ be a nontrivial character.\@ Then the pair $(\F_{q},\chi)$ has the nonvanishing minors property if and only if $G_{i}\neq G_{j}$ for some $i,j\in\{1,2,3\}$ and $T_{0}\neq 0$.
	\end{theorem}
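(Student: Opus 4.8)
The plan is to work directly with the matrix form of the NVM property: reduce the $(\chi,R,S)$-compressed Fourier matrix to a fixed $3\times3$ matrix in the variables $T_0,T_1,T_2$, and then evaluate every minor in terms of these three numbers. Since $\chi$ is nontrivial, $R$ and $S$ are sets of coset representatives of $H$ in $\F_q^{\times}$, so $rs\neq 0$ for every entry and Lemma~\ref{thm:GaussMatrix} always gives the second case. Writing $\mathrm{Ann}(H)=\{1,\vartheta,\vartheta^{2}\}$ with $\vartheta$ of order $3$ and $\varphi_i=\varphi_0\vartheta^{i}$, and letting $\delta(x)\in\{0,1,2\}$ denote the coset (discrete logarithm mod $3$) of $x\in\F_q^{\times}$, one has $\overline{\varphi_i}(rs)=\overline{\varphi_0}(r)\,\overline{\varphi_0}(s)\,\zeta_3^{-i(\delta(r)+\delta(s))}$. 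Substituting into Lemma~\ref{thm:GaussMatrix} gives $[\mathcal F_\chi]_{r,s}=\tfrac13\,\overline{\varphi_0}(r)\,\overline{\varphi_0}(s)\,T_{-(\delta(r)+\delta(s))}$. Ordering $R$ and $S$ by coset index exhibits the matrix as $\tfrac13 D_R M D_S$, where $D_R,D_S$ are invertible diagonal matrices with root-of-unity entries and $M_{a,b}=T_{-(a+b)\bmod 3}$. As multiplying by invertible diagonal matrices rescales every minor by a nonzero factor, $(\F_q,\chi)$ has the NVM property if and only if $M$ does.

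I would next record the factorization $M=VDV^{\top}$, where $V_{a,i}=\zeta_3^{-ai}$ is the (invertible) Fourier matrix and $D=\operatorname{diag}(G_0,G_1,G_2)$. The single $3\times3$ minor is $\det M=(\det V)^2 G_0G_1G_2$, which never vanishes: each $\varphi_i$ is nontrivial because $\chi$ is, so $|G_i|=\sqrt q$. The $1\times1$ minors are precisely $T_0,T_1,T_2$. A direct computation shows the nine $2\times2$ minors are, up to sign, the three quantities $T_0T_1-T_2^{2}$, $T_0T_2-T_1^{2}$ and $T_0^{2}-T_1T_2$; expanding them in the $G_i$ and applying $\overline{G_i}=q/G_i$, they become nonzero multiples of $\overline{T_2},\overline{T_1},\overline{T_0}$ respectively. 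Hence each $2\times2$ minor vanishes exactly when the corresponding $T_j$ does, and therefore $M$ has the NVM property if and only if $T_0T_1T_2\neq 0$.

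It then remains to match the condition $T_0T_1T_2\neq0$ with the statement. If $G_0=G_1=G_2$ then $T_1=T_2=0$, so ``not all $G_i$ equal'' is necessary. For the converse I would again invoke $|G_i|=\sqrt q$: if $T_1=0$ then $G_0,\zeta_3 G_1,\zeta_3^{2}G_2$ are three vectors of equal modulus summing to zero, hence form a centrally balanced equilateral triangle $\{\zeta,\zeta\zeta_3,\zeta\zeta_3^{2}\}$; running through the six assignments shows this forces either $G_0=G_1=G_2$ or $T_0=0$. Thus $T_0\neq0$ together with ``not all $G_i$ equal'' forces $T_1\neq0$, and symmetrically $T_2\neq0$, so $T_0T_1T_2\neq0$; combined with the previous paragraph this yields the claimed equivalence.

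The only genuinely arithmetic input is the modulus identity $|G_i|=\sqrt q$, and I expect the main obstacle to be precisely the last step: showing that no single $T_j$ can vanish unless $T_0$ vanishes or all three Gauss sums coincide. Getting the $2\times2$ minors to collapse cleanly onto the $\overline{T_j}$ — rather than producing a new, independent cancellation condition — is the crux that makes the final criterion so short.
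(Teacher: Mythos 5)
Your proposal is correct and follows essentially the same route as the paper: reduce to the $3\times 3$ matrix in $T_0,T_1,T_2$ by diagonal scaling, observe that the $1\times 1$ minors are the $T_j$, that the $2\times 2$ minors collapse via $\overline{G_i}=q/G_i$ onto nonzero multiples of the $\overline{T_j}$, that the determinant is a nonzero multiple of $G_0G_1G_2$, and then use the equal-modulus (equilateral triangle) argument to show $T_1=0$ or $T_2=0$ forces $T_0=0$ or $G_0=G_1=G_2$. The only cosmetic differences are that you derive the reduced matrix for a general coset-ordered $R,S$ rather than the explicit choice $\{1,\alpha,\alpha^2\}$, and you justify the determinant via the factorization $M=VDV^{\top}$, which the paper simply states.
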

	\begin{proof}
		Let $\kappa$ and $\overline{\kappa}$ be so that $\{\chi_{0},\kappa,\overline{\kappa}\}$ is the annihilator of $H$, that is, $\kappa$ and $\overline{\kappa}$ are the cubic multiplicative characters of $\F_{q}^{\times}$.\@ Let $\alpha\in \F_{q}$ be such that $\overline{\kappa}(\alpha) = \zeta_{3}$.\@ Consider $R=S=\{1,\alpha,\alpha^{2}\}$; this way $(\chi,R,S)$ is an orbit-representative 3-tuple of $H$.\@ Let $\varphi_{0}$ be a character extension of $\chi$ and $\varphi_{1},\varphi_{2}$ the other two extensions $\varphi_{0}\kappa$ and $\varphi_{0}\overline{\kappa}$ respectively.\@ The $(\chi,R,S)$-compressed Fourier matrix is then:
		$$
		\begin{bmatrix}
			\frac{T_{0}}{3} & \tfrac{\overline{\varphi_{0}}(\alpha) T_{1}}{3} &  \frac{\overline{\varphi_{0}}(\alpha^{2}) T_{2}}{3} \\
			\frac{\overline{\varphi_{0}}(\alpha) T_{1}}{3} & \frac{\overline{\varphi_{0}}(\alpha^{2}) T_{2}}{3} & \frac{T_{0}}{3}\\
			\frac{\overline{\varphi_{0}}(\alpha^{2}) T_{2}}{3} & \frac{T_{0}}{3}  & \frac{\overline{\varphi_{0}}(\alpha) T_{1}}{3}
		\end{bmatrix}.
		$$
		We may scale rows and columns to obtain the matrix
		$$
		M = \begin{bmatrix}
			T_{0} & T_{1} &   T_{2}\\
			T_{1} & T_{2} & T_{0}\\
			T_{2} & T_{0} & T_{1}
		\end{bmatrix},
		$$
		which has the nonvanishing minors property if and only if the $(\chi,R,S)$-compressed Fourier matrix does.\@ The minors of $1\times 1$ submatrices are precisely the entries $T_{0}$, $T_{1}$ and $T_{2}$.\@ For the minors of $2\times 2$ submatrices one can check that these are, up to sign, of the form $T_{i+1}T_{i+2}-T_{i}^{2}$ for $i\in \{0,1,2\}$, where the index $j$ in $T_{j}$ is considered mod $3$.\@ The result of this expression can be reduced by grouping the products of Gaussian sums and using the fact that $\zeta_{3}^{2}+\zeta_{3} - 2 = -3$:
		\begin{align*}
			T_{i+1}T_{i+2}-T_{i}^{2} &= -3\left( \zeta_{3}^{2i}G_{0}G_{2} + \zeta_{3}^{i}G_{0}G_{1} + G_{1}G_{2} \right)\\
			&= -3G_{0}G_{1}G_{2}\left( \frac{\zeta_{3}^{2i}}{G_{1}} + \frac{\zeta_{3}^{i}}{G_{2}} + \frac{1}{G_{0}} \right) \\
			&= -3G_{0}G_{1}G_{2}\left( \frac{\zeta_{3}^{2i}\overline{G_{1}}}{|G_{1}|^{2}} + \frac{\zeta_{3}^{i}\overline{G_2}}{|G_{2}|^{2}} + \frac{\overline{G_{0}}}{|G_{0}|^{2}} \right).
		\end{align*}
		Since Gaussian sums all have absolute value $\sqrt{q}$ we get
		\begin{align*}
			T_{i+1}T_{i+2}-T_{i}^{2} &= -\frac{3G_{0}G_{1}G_{2}}{q}\left( \overline{\zeta_{3}^{i}G_{1} + \zeta_{3}^{2i}G_{2} + G_{0}} \right)\\
			&=  -\frac{3G_{0}G_{1}G_{2}}{q} \overline{T_{i}}.
		\end{align*}
		Therefore the $2\times 2$ minors can be reduced to the entries $T_{j}$ for $j\in \{0,1,2\}$, so that $T_{i+1}T_{i+2}-T_{i}^{2}=0$ if and only if $T_{i}=0$.\@ Finally, the determinant of $M$, which is the only minor of a $3\times 3$ submatrix, is $-27G_{0}G_{1}G_{2}$ and is never zero.\@ With these results at hand, the nonvanishing minors property is satisfied if and only if $T_{0}$, $T_{1}$ and $T_{2}$ are all nonzero.\@ First, suppose the nonvanishing minors property holds, then we just have to show that $G_{i}\neq G_{j}$ for some $i,j\in\{0,1,2\}$.\@ If $G_{0}=G_{1}=G_{2}$, then $T_{1}=T_{2}=0$ arriving at a contradiction. 
		
		For the converse, suppose $T_{0}$ is nonzero and $G_{i}\neq G_{j}$ for some $i,j\in\{0,1,2\}$.\@ Assume $T_{1}=0$, then since Gaussian sums have absolute value $\sqrt{q}$ it follows that $\zeta_{3} G_{1} = \zeta_{3}^{\gamma}G_{0}$ and $\zeta_{3}^{2}G_{2} = \zeta_{3}^{\beta}G_{0}$, for some combination $\gamma,\beta\in \{1,2\}$ such that $\gamma + \beta = 3$.\@ If $\gamma=1$ and $\beta=2$, then $G_{1} = G_{0}= G_{2}$, which is not possible, and if $\gamma=2$ and $\beta=1$, then $G_{1}=\zeta_{3} G_{0}$ and $G_{2} = \zeta_{3}^2 G_{0}$ which leads to $T_{0}=0$, again a contradiction.\@ Following, assume $T_{2}=0$, then again $\zeta_{3}^{2} G_{1} = \zeta_{3}^{\gamma}G_{0}$ and $\zeta_{3} G_{2} = \zeta_{3}^{\beta}G_{0}$ for combination $\gamma,\beta\in \{1,2\}$ such that $\gamma + \beta = 3$.\@ If $\gamma=1$ and $\beta=2$, then $G_{1}= \zeta_{3}^{2}G_{0}$ and $G_{2} = \zeta_{3} G_{0}$ which leads to $T_{0}=0$, and if $\gamma=2$ and $\beta=1$, then $G_{1} = G_{0}=G_{2}$.\@ Thus, the result holds.
	\end{proof}
	
	An equivalent formulation of Theorem \ref{thm:MainResult} in terms of an uncertainty principle can be achieved with Proposition \ref{thm:SUP}:
	\begin{corollary}
		Let $\F_{q}$ be a finite field with $3\mid(q-1)$, let $H$ be the unique index-3 subgroup of $\F_{q}^{\times}$.\@ Let $\chi\colon H\to \C^{\times}$ be a nontrivial character.\@ For every nonzero $\chi$-symmetric element $f\in \C[\F_{q}]^{\chi}$ we have 
		$$|\supp(f)|+ |\supp(\hat{f})|\geq q+\tfrac{q-1}{3}-1,$$ 
		if and only if $G_{i}\neq G_{j}$ for some $i,j\in\{1,2,3\}$ and $T_{0}\neq 0$.
	\end{corollary}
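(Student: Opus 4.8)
The plan is to derive this Corollary directly from Theorem \ref{thm:MainResult} together with the uncertainty criterion of Proposition \ref{thm:SUP}, the only genuine computation being the evaluation of $|H|$ for an index-3 subgroup. Since the two results already do all the work, the proof amounts to a substitution followed by a chain of equivalences.

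First, I would observe that $\chi$ is nontrivial, so the applicable case of Proposition \ref{thm:SUP} is part (1): the pair $(\F_q,\chi)$ has the nonvanishing minors property if and only if every nonzero $\chi$-symmetric $f\in\C[\F_q]^{\chi}$ satisfies $|\supp(f)|+|\supp(\hat f)|\geq q+|H|-1$. Next I would compute $|H|$. Because $H$ is the index-3 subgroup of $\F_q^{\times}$ we have $[\F_q^{\times}:H]=3$, hence $|H|=|\F_q^{\times}|/3=(q-1)/3$. Substituting this into the bound $q+|H|-1$ yields exactly $q+\tfrac{q-1}{3}-1$, the quantity appearing in the Corollary. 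Thus Proposition \ref{thm:SUP} shows that the displayed uncertainty inequality holds for every nonzero $\chi$-symmetric $f$ precisely when $(\F_q,\chi)$ has the NVM property.

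Finally I would invoke Theorem \ref{thm:MainResult}, which characterizes the NVM property for this very setup (index-3 subgroup, nontrivial $\chi$) as the condition that $G_i\neq G_j$ for some $i,j$ together with $T_0\neq 0$. Chaining the two biconditionals, namely uncertainty inequality $\iff$ NVM property $\iff$ the Gaussian-sum condition, completes the argument.

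There is no substantive obstacle here; the proof is a substitution and two citations. The only point requiring a moment's care is the index labeling: Theorem \ref{thm:MainResult} and its proof range the indices over $\{1,2,3\}$ and $\{0,1,2\}$ respectively, but in both cases the hypothesis is simply that the three Gaussian sums are not all equal, so the condition transports unchanged to the statement of the Corollary.
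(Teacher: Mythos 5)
Your proposal is correct and matches the paper's proof, which simply cites Theorem \ref{thm:MainResult} and Proposition \ref{thm:SUP}; you have merely spelled out the substitution $|H|=(q-1)/3$ that the paper leaves implicit.
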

	\begin{proof}
		It is a direct consequence of Theorem \ref{thm:MainResult} and Proposition \ref{thm:SUP}.
	\end{proof}
	
	\begin{remark}
		Before finishing it is worth mentioning a few words about symmetric elements in the complex group algebra of a finite field $\mathbb{F}_q$.\@ In the trivial character case, the symmetric elements boil down to those elements $f\in \C[\F_{q}]$ with constant value $f_{a}$ on each $H$-orbit $Ha$, for all $a\in \F_{q}$.\@ When $\chi$ is non trivial, the $\chi$-symmetric elements can be described as follows: suppose $d\;|\;(q-1)$ and that $H$ is the unique subgroup of order $d$, so that $H=\left\langle \omega \right\rangle $ with $\omega$ a primitive $d^{\text{th}}$ root of unity in $\F_{q}^{\times}$.\@ Let $\chi\colon H \to \C^{\times}$ be the character defined by $\chi(\omega) = \zeta_{d}$, where $\zeta_{d}=e^{2\pi i /d}$.\@ All other characters are of the form $\varphi=\chi^{k}$ for some $k\in \{0,1,\dots,d-1\}$, consequently $\varphi(\omega) = \zeta_{d}^{k}$.\@ Then an element $f\in \C[\F_{q}]$ is $\varphi$-symmetric if and only if $f_{\omega^{j}a} = \zeta_{d}^{kj} f_{a}$ for all $j\in\{0,\dots,d-1\}$.
	\end{remark} 
	With regard to the NVM property for compressed Fourier matrices, the question remains whether more characterizations can be found for subgroups of larger index in terms of concise conditions, both for trivial and nontrivial characters.
	
	\section{Acknowledgments}
	The authors would like to thank Daniel J. Katz for his suggestions and observations regarding Theorem \ref{thm:MainResult} that helped to improve the result.\@ This work was supported by Pontificia Universidad Javeriana at Bogotá, Colombia, under the research project with ID 9714.
	\nocite{*}

\end{document}